\newtheorem{theorem}{Theorem}
\newtheorem{lemma}{Lemma}
\begin{document}
\title{Deep Euler method: solving ODEs by approximating the local truncation error of the Euler method}

\author{
	Xing Shen \\
	School of Mathematical Sciences\\
	Zhejiang University\\
	Hangzhou, Zhejiang, China\\
	\texttt{shenxingsx@zju.edu.cn}\\
	\And
	Xiaoliang Cheng \\
	School of Mathematical Sciences\\
	Zhejiang University\\
	Hangzhou, Zhejiang, China\\
	\texttt{xiaoliangcheng@zju.edu.cn}\\
	\And
	Kewei Liang\thanks{} \\
	School of Mathematical Sciences\\
	Zhejiang University\\
	Hangzhou, Zhejiang, China\\
	\texttt{matlkw@zju.edu.cn}\\
}

\maketitle

\begin{abstract}
\qquad In this paper, we propose a deep learning-based method, deep Euler method (DEM) to solve ordinary differential equations.  DEM significantly improves the accuracy of the Euler method by approximating the local truncation error with deep neural networks which could obtain a high precision solution with a large step size. The deep neural network in DEM is mesh-free during training and shows good generalization in unmeasured regions. DEM could be easily combined with other schemes of numerical methods, such as Runge-Kutta method to obtain better solutions. 
Furthermore, %
the error bound and stability of DEM is discussed.

\end{abstract}

\keywords{Deep Euler Method, Deep neural network, Ordinary differential equation}

\section{Introduction}
		Many problems in science and engineering can be modeled into a set of ordinary differential equations (ODEs)
		$$
			G(x,y,y^{\prime},y^{\prime\prime},\cdots) = 0, \quad x\in [a,b]\subset \mathbb{R}.
		$$
		In most cases, it can not be easy to obtain the analytic solution  
		and so one must typically rely on a numerical scheme to accurately approximate the solution. 		
		The important issues confronting the numerical study appear in the initial value problems since higher-order ODEs can be converted into the system of the first-order ODEs.
		Basic methods for initial value problems are the extremely popular Euler method or the Runge-Kutta method. 
		However, numerical methods have often to balance the discretization step size and computation time. %
		Furthermore, the class of stiff ordinary differential equations may still present a more serious challenge to numerical computation. 
In recent years, there has been a growing interest in solving the differential equations and the inverse problems by deep learning. The works include  numerical solutions of ODEs and PDEs (\cite{rudy2019deep}, \cite{raissi2018deep}, \cite{sun2019neupde}, \cite{farimani2017deep}), recovery of the involving systems (\cite{both2019deepmod}, \cite{khoo2017solving},  \cite{khoo2019switchnet}), overcoming the curse of dimension of high-dimensional PDEs (\cite{han2017overcoming},  \cite{hutzenthaler2018overcoming}), uncertainty quantification (\cite{tripathy2018deep},  \cite{winovich2019convpde}) etc. Besides, several works have focused on the combination of traditional numerical methods and deep neural networks. (\cite{sirignano2018dgm}) proposed a merger of Galerkin methods and deep neural networks (DNNs) to solve high-dimensional partial differential equations (PDEs). They trained   %
DNNs to satisfy the differential operator, initial condition, and boundary conditions. 
(\cite{raissi2019physics}) introduced  
physics-informed neural networks (PINNs), which is a deep learning framework %
for the synergistic combination of mathematical models and data. 
Following the physical laws of the control dynamics system, PINNs can deduce the solution of PDE and obtain the surrogate model. 
(\cite{weinan2018deep}) presented a deep Ritz method for the numerical solution of variational problems based on the Ritz method.
(\cite{he2018relu}) theoretically analyzed the relationship between DNN and finite element method(FEM). They explored the ReLU DNN representation of a continuous piecewise linear basis function in the finite element method.
(\cite{long2019pde}) proposed PDE-Net to predict the dynamics of complex systems. 
The underlying PDEs can be discovered from the observation data by establishing the connections between convolution kernels in CNNs and differential operators. 
Based on the integral form of the underlying dynamical system, (\cite{qin2019data}) considered ResNet block as a one-step method and recurrent ResNet and recursive ResNet as multi-step methods.
(\cite{wu2020data}) approximated the evolution operator by a residual network to solve and recover unknown time-dependent PDEs.
(\cite{raissi2018multistep}) blended the multi-step schemes with deep neural networks to identify and forecast nonlinear dynamical systems from data. 
(\cite{regazzoni2019machine}) proposed neural networks based Model Order Reduction technique to solve dynamical systems arising from differential equations.
(\cite{wang2019learning}) used reinforcement learning to empower Weighted Essentially Non-Oscillatory Schemes(WENO) for solving 1D scalar conservation laws. 

It is well known that the forward Euler method is very easy to implement but it can't give accurate solutions. 
The main reason is that the Euler method has only one order approximation accuracy, which requires a very small step size for any meaningful result. 
This makes the Euler method rarely used in practical applications and motivates us to propose a new Euler method combined with DNNs. 
We call the new method as deep Euler method (DEM). 
DEM only has the most general structure of a fully connected neural network, without any special designs in its structure, such as residual connections. 
As with some other deep neural network models, DEM also learns its representation using supervised pre-training. 
After the neural network gets trained satisfactorily, we post-process it to predict the solution of the ODE. 
The key difference is that in DEM, we explicitly capture information of the local truncation error of the Euler method instead of directly approaching the solution of the ODE. %
DEM has achieved state-of-the-art performance in solving ODEs, which is much better than the conventional numerical method, especially than the classical Euler method. 
This success can be attributed to the ability of the deep neural network in learning very strong hierarchical nonlinear representation. In particular, breakthroughs in supervised learning training are essential for deep neural networks to effectively and robustly predict.

The paper is organized as follows. %
We introduce the main idea of DEM in section 2 and  %
give theoretical results of DEM in section 3. Based on DEM, we also derive other schemes of the single-step method for solving ODEs in section 4.
In Section 5, numerical examples are given to demonstrate the capability and effectiveness of DEM. Finally, we conclude the paper in section 6.

\section{Deep Euler Method}

\subsection{Formulation} %

Considering the following initial value ordinary differential equation:
\begin{equation}\label{eq:1order_original_ODE}
\left\{
\begin{array}{l}
	\frac{d y}{d x}=f(x, y), \quad x \in I = [a, b], \\ %
	y(a)= c, 
\end{array}
\right.
\end{equation}
where the solution $y(x) : I \rightarrow \Omega \subset \mathbb{R}^n$ and $f$ satisfies the Lipschitz condition in $y$, i.e.,
$$\| f(x,y_1) -f(x,y_2)\| < L\|y_1-y_2\|.$$
We introduce the discretization mesh (or sampling points) in $x$, 
	$$
		a = x_0 < x_1 < \cdots < x_{M} = b.
	$$
	Let $h_{m} = x_{m+1}  - x_{m}$ be the mesh size and $y_{m}$ be the numerical approximation of $y(x_{m})$. 
	The forward Euler method for (\ref{eq:1order_original_ODE}) is
	\begin{equation}
	\left\{
		\begin{array}{l}
			y_{m} = y_{m-1} + h_{m-1} f(x_{m-1},y_{m-1}), \quad m=1,\cdots,M \\ 
			y_0= c.
		\end{array}
		\right.
	\end{equation}	
	Note that in most cases, we always adopt the uniform mesh for the forward Euler method, 
	$h = h_m $ and $x_m = a + mh$, $m=0,1,\cdots,M-1$.  
The local truncation error and the global error are defined as %
\begin{equation}\label{eq:local_truncation_error_define}
	R_m = y(x_{m+1}) - y(x_m) - hf(x_m,y(x_m)) = \int_{x_m}^{x_{m+1}}f(x,y(s)) ds - h f(x_m,y(x_m)).
\end{equation}
and 
$$
	e = |y(x_m) - y_m |,
$$
respectively. 
It is well known that %
$R_m = \mathcal O (h^2)$  and %
$e = %
\mathcal O (h)$ (\cite{Stig2008book}). %
To obtain higher accuracy than the Euler method, a direct scheme is to separate a part of $R_m$ to improve the Euler step $y_{m+1}$, so as to reduce the local truncation error and the global error of Euler method. To this end, we introduce a feedforward neural network in DEM that infers the update of an Euler step. As universal approximators, multilayer fully connected feedforward neural networks can approximate any continuous function arbitrarily (\cite{leshno1993multilayer}).
From (\ref{eq:local_truncation_error_define}), we could consider $R_m$ as a continuous function of variables $x_m,x_{m+1},y_m$. Thus, we utilize the fully connected neural network trained with enough measured data to approximate $\frac{1}{h_m^2}R_m$.

Let $\mathcal{N}(x_i,x_j,y_i;\theta) : \mathbb{R}^{n+2} \rightarrow \mathbb{R}^{n}$ be the nonlinear operator defined by a multilayer fully connected %
neural network. The parameter $\theta$ includes all the weights and the biases in the neural network.
DEM for (\ref{eq:1order_original_ODE}) can be written as
\begin{equation}\label{eq:ODE_DEM_formula}
	\left\{
	\begin{array}{l}
		y_{m+1} = y_m + h_m f(x_m,y_m) + h_m^2 \mathcal{N}(x_m,x_{m+1},y_m;\theta), \quad m=0,\cdots, M-1,\ \\
		y_0 = c.
	\end{array}
	\right.
\end{equation}
Formula (\ref{eq:ODE_DEM_formula}) consists of Euler approximation and neural network approximation. The first part makes full use of the information of $f$ to express the linearity of ODE. The latter corrects the results of Euler approximation to obtain higher accuracy and express nonlinear features. %
Abstractly, %
${\mathcal {N}}$ %
can be thought of as a parametric function that learns how to represent the local truncation error so that their most salient characteristics can be reconstructed from its inputs and outputs. 
The output of ${\mathcal{N}}(x_{m},x_{m+1},y_{m};\theta)$ contains all features extracted in training to update the formula of the Euler method. 
Compared with using a neural network to approximate the solution of ODE directly, DEM 
separates the nonlinear part from the numerical scheme and then makes full use of neural networks to approximate the local truncation error of Euler method. 
Moreover, ${\mathcal {N}}$ has the same function as the nonlinear denoising process. This provides a very powerful and flexible method for solving ODEs because we can impose high order error correction and reduce the constrain of the step size $h$ in the Euler method. Hence, DEM can either improve the accuracy of the Euler method or speed up the computations of ODEs. 

There are underlying principles for designing neural network architecture. In fact, we have another design of neural network 
$\mathcal{N}(x_i,x_j;\theta) : \mathbb{R}^{2} \rightarrow \mathbb{R}^{n}$. The output of the neural network is still an approximation of the local truncation error, while the input only has $x_{m}$ and $x_{m+1}$. In this case, if %
$n >> 2$, the neural network becomes very difficult to train. 
Because the dimension of the output is much larger than that of the input, it is almost impossible for the neural network to predict the sophisticated target with such few features. 

\subsection{Details of DEM} 
DEM is only a standard multilayer fully connected neural network,
without any special designs in its structure, such as residual connections.
With the input ${\bf{x}} = (x_i,x_j,y_i) \in \mathbb{R}^{n+2}$, the neural network in DEM can be written as
$$
	\mathcal{N} ({\bf{x}} ;\theta) = L_{K} \circ \sigma \circ L_{K-1} \cdots  \sigma \circ L_{1} ({\bf{x}}).
$$
The nonlinear activation function $\sigma(t) = \max\{0,t\}$ is rectified linear units (ReLU)  function. The $k$-th hidden layer %
has the following form 
$$
	L_{k}(z) = \boldsymbol{W}_{k} z + \boldsymbol{b}_{k}, \quad 1\leq k\leq K,
$$
where the weight matrix $\boldsymbol{W}_{k} \in \mathbb{R}^{p_k\times p_{k-1}}$,  
the bias $\boldsymbol{b}_{k}\in\mathbb{R}^{p_k}$, $p_{k}$ is the number of neurons in the $k$-th layer.

We assume that the measurement data is contaminated by noise, so that the training dataset $\mathbf D = \{(x_j,z_j)\}_{j=1}^{N}$ has the form  $z_j = y(x_j) + \delta_j$ and satisfies
$$
	\frac{1}{N} \sum_{j=0}^{N} \delta_j^2 \leq \delta^2,
$$
where the scalar $\delta$ is called the noise level.

For any pair of measurements $\{(x_i,z_i), (x_j,z_j)\}$ ($x_i<x_j$), we introduce the local truncation error function
\begin{equation}\label{eq:R_define}
	R(x_i,x_j,z_i,z_j) = \frac{1}{(\Delta x)^2} \left[ z_j - z_i - \Delta x f(x_i,z_i) \right],
\end{equation}
where $\Delta x = x_j - x_i$. With the following supervised loss
\begin{equation}
\label{eq:loss_define}
	J(\theta) = \frac{2}{N(N-1)} \sum_{1\leq i,j \leq N} \| {\mathcal{N}}(x_i,x_j,z_i;\theta) -  R(x_i,x_j,z_i,z_j) \|_{L^1},
\end{equation}
DEM learns to approximate the local truncation error of the Euler method. 
The coefficient comes from $ {\rm C}_N^2 = \frac{N(N-1)}{2}$ the number of pairs in dataset $D$.

Note that for any input pair $\{x_i, x_j\}$, each of the training captures the features in the local truncation error, which have close relations with %
$f$.  
Once features extractors corresponding to all pairs are trained and the strong hierarchical non-linear representations are generated, any new $y(x)$ ($x\neq x_{i}\in D$) is then represented by (\ref{eq:ODE_DEM_formula}). 
On the other hand, DEM is mesh-free because all training data can be generated randomly and  not necessary to locate at mesh points. 
Moreover, recalling the local truncation error of the Euler method is proportional to the square of the step size, we have $\frac{1}{h^2}R_m = \mathcal{O}(1)$. 
Hence, the neural network of DEM approaches a non-linear continuous function of  $\mathcal{O}(1)$, which is 
much easier than directly approximating the solution of the ODE.  
This makes DEM easier to train and requires fewer data in training. 

\section{Theoretical Analysis}

\subsection{Error Bound}

		\begin{lemma}\label{lemma}
			Assume that the trained neural network ${\cal N}$ satisfies
			$$\left|\mathcal{N}(x_m,x_{m+1},z_m;\theta)-R(x_m,x_{m+1},z_m,z_{m+1})\right|<\mathcal{O}(\eta).$$
			If $\delta<\eta$ and $h > \sqrt{\frac{\delta}{\eta}}$, then %
			$$
			\left|{\mathcal{N}}(x_{m},x_{m+1},y(x_m);\theta) - \dfrac{1}{h^{2}}R_{m}\right| < \mathcal{O}(\eta).
			$$
		\end{lemma}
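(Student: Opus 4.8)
The plan is to bound $\left|{\mathcal{N}}(x_{m},x_{m+1},y(x_m);\theta) - \frac{1}{h^{2}}R_{m}\right|$ by inserting the ``measured'' quantities that actually appear in the training loss and splitting by the triangle inequality into three pieces, with $\Delta x = x_{m+1}-x_m = h$:
\begin{equation*}
\underbrace{\left|\mathcal{N}(x_m,x_{m+1},y(x_m);\theta)-\mathcal{N}(x_m,x_{m+1},z_m;\theta)\right|}_{\text{(I)}} + \underbrace{\left|\mathcal{N}(x_m,x_{m+1},z_m;\theta)-R(x_m,x_{m+1},z_m,z_{m+1})\right|}_{\text{(II)}} + \underbrace{\left|R(x_m,x_{m+1},z_m,z_{m+1})-\frac{1}{h^{2}}R_m\right|}_{\text{(III)}}.
\end{equation*}
Term (II) is $\mathcal{O}(\eta)$ immediately from the standing hypothesis on the trained network. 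So the work is in (I) and (III).

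For (I) I would use that, once $\theta$ is fixed, the ReLU network $\mathcal{N}(\cdot;\theta) = L_K\circ\sigma\circ\cdots\circ\sigma\circ L_1$ is Lipschitz in its input with constant at most $\prod_{k=1}^{K}\|\boldsymbol{W}_k\|$, because $\sigma$ is $1$-Lipschitz and each $L_k$ is affine; hence (I) is bounded by $\big(\prod_k\|\boldsymbol{W}_k\|\big)\,|y(x_m)-z_m| = \big(\prod_k\|\boldsymbol{W}_k\|\big)\,|\delta_m| = \mathcal{O}(\delta)$, which is $\mathcal{O}(\eta)$ since $\delta<\eta$. For (III) I would substitute $z_m = y(x_m)+\delta_m$, $z_{m+1}=y(x_{m+1})+\delta_{m+1}$ into definitions (\ref{eq:R_define}) and (\ref{eq:local_truncation_error_define}) and subtract, which gives
\begin{equation*}
R(x_m,x_{m+1},z_m,z_{m+1})-\frac{1}{h^2}R_m = \frac{1}{h^2}\left[\delta_{m+1}-\delta_m - h\big(f(x_m,y(x_m)+\delta_m)-f(x_m,y(x_m))\big)\right].
\end{equation*}
Applying the Lipschitz condition on $f$ in $y$ yields the bound $\frac{1}{h^2}\big(2\,\mathcal{O}(\delta)+hL\,\mathcal{O}(\delta)\big) = \mathcal{O}\!\left(\frac{\delta}{h^2}\right)+\mathcal{O}\!\left(\frac{L\delta}{h}\right)$. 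The hypothesis $h>\sqrt{\delta/\eta}$ is precisely what makes $\delta/h^2<\eta$, and since additionally $\delta<\eta$ we get $\delta/h < \sqrt{\delta\eta}<\eta$, so (III) is $\mathcal{O}(\eta)$ too. Summing (I), (II), (III) gives the claim.

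The points that need care, and the main obstacle, are the $\mathcal{O}(\cdot)$ bookkeeping around the noise. The dataset assumption is only a mean-square bound $\frac1N\sum_j\delta_j^2\le\delta^2$, so the steps that treat an individual $|\delta_m|$ as $\mathcal{O}(\delta)$ should either be promoted to a uniform (pointwise) noise assumption or replaced by a component-by-component argument; this is the step I expect to be most delicate to state cleanly. One should also make explicit that the network's input-Lipschitz constant $\prod_k\|\boldsymbol{W}_k\|$ is finite and fixed (it does not degrade as $h$ or $\eta\to0$), which is what legitimizes bounding (I). Finally, the threshold $h>\sqrt{\delta/\eta}$ is tight essentially because the measurement noise enters $R$ divided by $h^2$, so shrinking $h$ amplifies noise quadratically — this is the conceptual content the estimate is capturing, and the proof should present it that way.
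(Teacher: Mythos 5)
Your proposal is correct and follows essentially the same route as the paper: the identical three-term triangle-inequality split, the Lipschitz continuity of the fixed ReLU network (the paper cites Lemma 4 of \cite{xu2012robustness} to get $L_{\mathcal{N}}=\alpha^K\beta^K$, which is your product-of-weight-norms bound), and the same $\frac{2\delta}{h^2}+\frac{L\delta}{h}$ estimate for the $R$-versus-$\frac{1}{h^2}R_m$ term controlled by $h>\sqrt{\delta/\eta}$. Your caveat about the mean-square noise bound versus the pointwise $|\delta_m|=\mathcal{O}(\delta)$ used in the estimates is well taken --- the paper papers over exactly this point by writing $|y(x_m)-z_m|\leq C\delta$ with an unexplained constant $C$.
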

		
		\begin{proof}
			From %
			Lemma 4 in \cite{xu2012robustness}, 
			we have the conclusion that the neural network in DEM is Lipschitz continuous. That is, 
			for any $\mathbf{x}_1,\mathbf{x}_2$, %
			$$
			|\mathcal{N}(\mathbf{x}_1;\theta)- \mathcal{N}(\mathbf{x}_2;\theta)| \leq L_{\mathcal{N}} \|\mathbf{x}_1-\mathbf{x}_2\| ,
			$$
			where $L_{\mathcal{N}}=\alpha^K \beta^K $,
			$\alpha = \max_{1 \leq k \leq K} \|W_k \|_{\infty}$, $\beta = \max _{a, b \in \mathbb{R}, a \neq b} \frac{|\sigma(a)-\sigma(b)|}{|a-b|}$,  $K$ is the number of layers of the neural network. 
			From
			\begin{eqnarray*} 
				\left| \mathcal{N}(x_m,x_{m+1},y(x_m);\theta)-\mathcal{N}(x_m,x_{m+1},z_m;\theta) \right| 
				& \leq &   L_{\mathcal{N}}|y(x_m)-z_m| \\
				& \leq & C  L_{\mathcal{N}} \delta < \mathcal{O}(\eta), \quad (C\  \text{is\  a\  constant}) 
			\end{eqnarray*}
			and %
			\begin{eqnarray*} 
				&&\left|R(x_m,x_{m+1},z_m,z_{m+1}) - \frac{1}{h^2}R_m\right| \\
				&=& \frac{1}{h^2}\left|[z_{m+1}-y(x_{m+1})]-[z_{m}-y(x_{m})]- \dfrac{}{}  h[f(x_m,z_m)-f(x_m,y(x_m))] \right|\\
				& \leq & \frac{2C\delta}{h^2}+ \frac{L\delta}{h}\\
				& \leq & \mathcal{O}(\eta),
			\end{eqnarray*}
			we have
			\begin{eqnarray*}
				\left|{\mathcal{N}}(x_{m},x_{m+1},y(x_m);\theta) - \dfrac{1}{h^{2}}R_{m}\right| &<&\left| \mathcal{N}(x_m,x_{m+1},y(x_m);\theta)-\mathcal{N}(x_m,x_{m+1},z_m;\theta) \right|\\
				&+& \left|R(x_m,x_{m+1},z_m,z_{m+1}) - \frac{1}{h^2}R_m\right|\\
				&+& \left|\mathcal{N}(x_m,x_{m+1},z_m;\theta)-R(x_m,x_{m+1},z_m,z_{m+1})\right|\\
				&\leq & \mathcal{O}(\eta)
			\end{eqnarray*}
		\end{proof}
		
		For each pair of measurements $\{(x_i,z_i),(x_j,z_j)\}$, we use (\ref{eq:R_define}) to construct the training samples of the neural network. %
		In the proof of Lemma \ref{lemma}, we have known that  $\left|R(x_m,x_{m+1},z_m,z_{m+1}) - \frac{1}{h^2}R_m\right| $ is smaller than a quantity which contains a factor $\frac{1}{h}$. This indicates the big $h$ is a good choice. Therefore, we will only select the measurement pair with the large $h=x_j-x_i$ to construct training samples.  

		\vskip 0.5cm
					
		\begin{theorem}\label{thm:local_error_DEM}
			Under the assumptions of Lemma \ref{lemma}, the local truncation error of DEM is $\mathcal O(\eta h^2)$ and the global truncation error is $\mathcal O(\eta h)$.
		\end{theorem}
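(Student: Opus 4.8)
The plan is to follow the standard two-stage template for the convergence of a one-step method: first bound the local truncation error of DEM using Lemma \ref{lemma}, then propagate that bound through a discrete Gr\"onwall-type recursion to control the global error.

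\textbf{Local truncation error.} I would define the local truncation error of DEM by substituting the exact solution into the scheme (\ref{eq:ODE_DEM_formula}),
\[
  T_m := y(x_{m+1}) - y(x_m) - h f(x_m, y(x_m)) - h^2 \mathcal{N}(x_m, x_{m+1}, y(x_m); \theta).
\]
The first three terms are exactly $R_m$ from (\ref{eq:local_truncation_error_define}), so
\[
  T_m = R_m - h^2 \mathcal{N}(x_m, x_{m+1}, y(x_m);\theta) = h^2\Bigl(\tfrac{1}{h^2}R_m - \mathcal{N}(x_m,x_{m+1},y(x_m);\theta)\Bigr).
\]
Lemma \ref{lemma} bounds the parenthesized quantity by $\mathcal{O}(\eta)$, hence $|T_m| < \mathcal{O}(\eta h^2)$, which is the first assertion.

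\textbf{Global truncation error.} Let $e_m = y(x_m) - y_m$, and subtract the DEM update from the exact identity $y(x_{m+1}) = y(x_m) + h f(x_m, y(x_m)) + h^2 \mathcal{N}(x_m, x_{m+1}, y(x_m);\theta) + T_m$. This yields
\[
  e_{m+1} = e_m + h\bigl[f(x_m, y(x_m)) - f(x_m, y_m)\bigr] + h^2\bigl[\mathcal{N}(x_m, x_{m+1}, y(x_m);\theta) - \mathcal{N}(x_m, x_{m+1}, y_m;\theta)\bigr] + T_m.
\]
Using the Lipschitz condition on $f$ with constant $L$ and the Lipschitz continuity of $\mathcal{N}$ with constant $L_{\mathcal{N}} = \alpha^K\beta^K$ established in the proof of Lemma \ref{lemma} (noting that only the $y$-slot of $\mathcal{N}$ differs between the exact and numerical trajectories), I obtain
\[
  \|e_{m+1}\| \le \bigl(1 + hL + h^2 L_{\mathcal{N}}\bigr)\|e_m\| + \|T_m\|.
\]
With $e_0 = 0$ and $\|T_m\| \le C\eta h^2$, an induction gives $\|e_m\| \le C\eta h^2 \cdot \bigl[(1 + hL + h^2 L_{\mathcal{N}})^m - 1\bigr]/(hL + h^2 L_{\mathcal{N}})$. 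Since $mh \le b-a$, the amplification factor obeys $(1 + hL + h^2 L_{\mathcal{N}})^m \le e^{(b-a)(L + h L_{\mathcal{N}})}$, which is bounded for $h$ in a bounded range, while the denominator is $\Theta(h)$; one power of $h$ survives, so $\|e_m\| = \mathcal{O}(\eta h)$, the second assertion.

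The analytically heavy content is already packaged inside Lemma \ref{lemma}, so what remains is routine. The two points that still need care are (i) choosing the DEM local truncation error so that Lemma \ref{lemma} applies without modification, and (ii) keeping the amplification factor $(1 + hL + h^2 L_{\mathcal{N}})^m$ uniformly bounded, which is where the hypothesis that $h$ lies in a bounded range together with $mh \le b-a$ is used. I expect step (ii) --- pinning down the regime of $h$ in which all constants are uniform (in particular that $h$ is bounded above, compatibly with the lower bound $h > \sqrt{\delta/\eta}$ from Lemma \ref{lemma}) --- to be the only genuine subtlety.
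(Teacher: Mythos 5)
Your proposal is correct and follows essentially the same route as the paper: the local truncation error is obtained exactly as you do, by writing $T_m = R_m - h^2\mathcal{N}(x_m,x_{m+1},y(x_m);\theta)$ and invoking Lemma \ref{lemma}. The paper then simply asserts the global bound without argument, whereas you supply the standard discrete Gr\"onwall recursion $\|e_{m+1}\| \le (1+hL+h^2L_{\mathcal{N}})\|e_m\| + \|T_m\|$ (using the same Lipschitz constant $L_{\mathcal{N}}$ that the paper later uses in its stability proof), so your write-up is a strictly more complete version of the intended proof.
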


\begin{proof}
From (\ref{eq:ODE_DEM_formula}), the local truncation error (LTE) of DEM is
\begin{eqnarray*}
	LTE &=& \left| y(x_{m+1}) - y(x_m) - hf(x_m,y(x_m)) - h^{2}{\mathcal{N}}(x_{m},x_{m+1},y(x_{m});\theta) \right| \\ 
	&\leq & h^2 \left|{\mathcal{N}}(x_{m},x_{m+1},y(x_{m});\theta) - \dfrac{1}{h^{2}}R_{m}\right| \\
	&<& \mathcal{O}(\eta h^2).
\end{eqnarray*}
Hence, we can also conclude that the global truncation error is $\mathcal O(\eta h)$.  
\end{proof}
Compared with the Euler method, the errors of DEM are reduced by $\eta$ times. 
The solution with high accuracy can be obtained. Besides, the size constrains of $h$ in the Euler method can be relaxed to speed up the computation of the solutions. For example, if the global error should be $O(10^{-6})$, the step size in the Euler method is at most $10^{-6}$. 
Starting from the initial $y(0)$, it takes $10^{6}$ Euler steps to get the solution $y(1)$.  
If $\eta = 10^{-4}$, the step size in DEM can be $10^{-2}$ and the number of steps can be reduced to $10^2$, which is much smaller than the Euler method.		

\subsection{Numerical stability}

\begin{theorem}
Under the assumptions of Theorem \ref{thm:local_error_DEM}, DEM is stable.
\end{theorem}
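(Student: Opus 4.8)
The plan is to read DEM as a classical one-step method and apply the standard discrete Gr\"onwall stability argument, the only non-routine input being the Lipschitz continuity of the trained network $\mathcal{N}$ that was already established inside the proof of Lemma \ref{lemma}. Rewrite (\ref{eq:ODE_DEM_formula}) in increment form $y_{m+1} = y_m + h_m \Phi(x_m,y_m;h_m)$ with
$$
\Phi(x,y;h) = f(x,y) + h\,\mathcal{N}(x,x+h,y;\theta).
$$
The first step is to show that $\Phi$ is Lipschitz in its $y$-argument, uniformly for $x \in I$ and $0 < h \le b-a$. Combining the hypothesis $\|f(x,y_1)-f(x,y_2)\| \le L\|y_1-y_2\|$ with the bound $|\mathcal{N}(\mathbf{x}_1;\theta)-\mathcal{N}(\mathbf{x}_2;\theta)| \le L_{\mathcal{N}}\|\mathbf{x}_1-\mathbf{x}_2\|$ from Lemma \ref{lemma} (note that perturbing only the $y$-slot of $\mathcal{N}$ leaves the first two inputs fixed), one gets
$$
\|\Phi(x,y_1;h)-\Phi(x,y_2;h)\| \le \bigl(L + h L_{\mathcal{N}}\bigr)\|y_1-y_2\| \le \Lambda\,\|y_1-y_2\|, \qquad \Lambda := L + (b-a)L_{\mathcal{N}}.
$$

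Next I would compare two DEM trajectories: the computed sequence $\{y_m\}$ and a perturbed sequence $\{\tilde y_m\}$ produced with a perturbed initial value and with an arbitrary defect $\rho_m$ (rounding error, inexact evaluation) inserted at step $m$. Subtracting the two recursions, using the triangle inequality and the Lipschitz bound on $\Phi$, gives
$$
\|y_{m+1}-\tilde y_{m+1}\| \le (1+h_m\Lambda)\,\|y_m-\tilde y_m\| + \|\rho_m\|.
$$
Iterating this estimate and using $1+t \le e^{t}$ together with $\sum h_k \le b-a$ yields the discrete Gr\"onwall bound
$$
\|y_m-\tilde y_m\| \le e^{\Lambda(x_m-a)}\Bigl(\|y_0-\tilde y_0\| + \sum_{k=0}^{m-1}\|\rho_k\|\Bigr).
$$
Since the constant $e^{\Lambda(b-a)}$ depends only on $L$, $L_{\mathcal{N}}$ and $b-a$, and \emph{not} on the number of steps $M$ (equivalently, not on $h$), the global effect of any perturbation of the data stays bounded by a fixed multiple of that perturbation. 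This is exactly the statement that DEM, viewed as a one-step method, is stable, so the theorem follows.

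\textbf{Main obstacle.} The delicate point is to be sure that $L_{\mathcal{N}}$ is a genuine finite constant that is independent of the mesh and of $M$; otherwise the bound above would be vacuous. This is legitimate because after training the weights $\boldsymbol{W}_k,\boldsymbol{b}_k$ and the depth $K$ are frozen and the ReLU activation has slope bounded by $\beta=1$, so $L_{\mathcal{N}} = \alpha^K\beta^K$ is one fixed number. The secondary bookkeeping issue is that the factor $h_m$ multiplying $\mathcal{N}$ in $\Phi$ must be retained throughout: it is precisely what prevents the network's contribution to the Lipschitz constant from scaling badly and keeps $\Lambda$ finite and step-size independent. Once these two points are pinned down, the remainder is the routine discrete Gr\"onwall computation sketched above.
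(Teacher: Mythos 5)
Your proposal is correct and follows essentially the same route as the paper: both proofs compare two DEM trajectories, use the Lipschitz continuity of $f$ and of the trained network $\mathcal{N}$ (the latter inherited from Lemma \ref{lemma}) to obtain the one-step contraction factor $1+h_mL+h_m^2L_{\mathcal{N}}$, and telescope over the mesh using $\sum_m h_m = b-a$ to get a step-number-independent constant. The only differences are cosmetic: you additionally allow per-step defects $\rho_m$ and phrase the final bound via $1+t\le e^t$ rather than as a bounded product, which is a mild (and harmless) generalization of what the paper does.
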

\begin{proof}
For the initial values $y_0$ and $z_0$ ($y_0\neq z_0$), DEM generates two approaches $\{y_m\}$ and $\{z_m\}$ with
\begin{equation}
\begin{array}{l}
{y_{m} = y_{m-1} + h_{m-1} f(x_{m-1},y_{m-1}) + h_{m-1}^2\mathcal{N}(x_{m-1},x_{m},y_{m-1};\theta)},\\

{z_{m} = z_{m-1} + h_{m-1} f(x_{m-1},z_{m-1}) + h_{m-1}^2\mathcal{N}(x_{m-1},x_{m},z_{m-1};\theta)}.\\
\end{array}
\end{equation}
Since $\sum_{m=0}^{M-1} h_m = b-a$ and $| \mathcal{N}(x_{m-1},x_{m},y_{m-1};\theta) - \mathcal{N}(x_{m-1},x_{m},z_{m-1};\theta)| \leq L_{\mathcal{N}} |y_{m-1}-z_{m-1}|$, we have
\begin{eqnarray*}
|y_{m} - z_{m}|   &\leq &    |y_{m-1} - z_{m-1}| + h_{m-1}|f(x_{m-1},y_{m-1})-f(x_{m-1},z_{m-1})|  \\
				&& \qquad + h_{m-1}^2 L_{\mathcal{N}} |y_{m-1}-z_{m-1}|\\
				&\leq & (1+h_{m-1}L+h_{m-1}^2L_{\mathcal{N}}) |y_{m-1} - z_{m-1}|\\
			&\leq & \prod_{n=0}^{m-1} (1+h_{n}L+h_{n}^2L_{\mathcal{N}})|y_{0} - z_{0}|\\
			&\leq & C|y(0)-z(0)|
\end{eqnarray*}
where $C$ is constant.

\end{proof}
Considering the stiff equation $\frac{dy}{dx} = \lambda y$, where $\lambda<0$ and the initial value  $y(a) = c$. The stability domain of DEM is $\{ h\in \mathbb{C} | \space |1+ h\lambda + h^2L_{\mathcal{N}}|\leq 1\}$, while the Euler method is $\{ h\in \mathbb{C} | \space |1+ h\lambda|\leq 1\}$. 
Although it can not be proved theoretically that the former must be larger than the latter,  
DEM can use  a large step in numerical experiments. Under such a step size, the forward Euler method is certainly unstable. On the other hand side, a large stability domain can be obtained by adjusting $L_{\mathcal{N}}$. 
Recall that $L_{\mathcal{N}} = \alpha^k\beta^K$, where $\beta$ is determined by the activation function. If ReLU activation function is adopted then $\beta=1$. We can change the norm of weight matrix $\alpha$ by using the techniques of weight clipping (\cite{salimans2016weight}) and weight normalization (\cite{arjovsky2017wasserstein}), %
to adjust $L_{\mathcal{N}}$.
For example, when $\lambda=-5 $ and $L_{\mathcal{N}} = 6$, The stability domain of DEM is $0<h\leq \frac{5}{6}$, while the Euler method is $0<h\leq \frac{2}{5}$. Hence we can use a larger step size to solve the equation than the Euler method.

\section{Single Step methods}
Based on the idea of approximating the local truncation error with a deep neural network, DEM could be generalized to other linear single-step methods. 
For instance, Heun's method  
$$y_{m+1}=y_{m}+\frac{h}{2}\left[f\left(x_{m}, y_{m}\right)+f\left(x_{m+1}, y_{m}+h f\left(x_{m}, y_{m}\right)\right)\right]  $$
is a second-order Runge-Kutta method. We can also add a deep neural network in it and get
\begin{equation}\label{eq:Heun_method_NN_ODE}
	y_{m+1}=y_{m}+\frac{h}{2}\left[f\left(x_{m}, y_{m}\right)+f\left(x_{m+1}, y_{m}+h f\left(x_{m}, y_{m}\right)\right)\right] 
			+ h_m^3{\cal N}(x_m,x_{m+1},y_m;\theta).
\end{equation}
More generally,
a $p$ order single-step method of (1) can be written as:
\begin{equation}\label{eq:p_order_single_step_ODE}
\left\{
\begin{array}{l}{
y_{m+1} = y_m + h \psi(x_m,y_m,h) }, \\
{y_0 = c}.
\end{array}
\right.
\end{equation}
The local truncation error is 
$$R_m = y_{m+1} - y_m - h\psi(x_m,y_m,h) = \mathcal{O}(h^{p+1}).$$
The method (\ref{eq:p_order_single_step_ODE}) can also be modified as 
\begin{equation}\label{eq:p_order_NN_ODE}
		\left\{
		\begin{array}{l}
			y_{m+1} = y_m + h \psi(x_m,y_m,h)  %
				+ h^{p+1}{\cal N}(x_m,x_{m+1},y_m;\theta), \\
			{y_0 = c}.
		\end{array}
		\right.
\end{equation}
For (\ref{eq:Heun_method_NN_ODE}) and (\ref{eq:p_order_NN_ODE}), the local truncation errors are $\mathcal{O}(\eta h^3)$ and $\mathcal{O}(\eta h^{p+1})$, respectively. The corresponding global truncation errors are $\mathcal{O}(\eta h^2)$ and $\mathcal{O}(\eta h^p)$. 
\section{Numerical Example}
\subsection{Example 1}
Considering the following initial value problem:
\begin{equation}\label{eq:the_first_example}
\left\{
\begin{array}{l}
{\frac{dy}{dx} = \frac{3}{2}\frac{y}{x+1}+\sqrt{x+1}}, \space x\in[0,10]\\
{y(0) = 0}
\end{array}
\right.
\end{equation}
where the exact solution %
is $y =(x+1)^{3/2} \log(x+1)$.

At first, we highlight the performance of DEM with different step sizes with noise-free measured data. We generate 200 random noise-free measured data $\left\{ (t_i, y(t_i) )\right\}_{i=1}^{200}$,  %
by sampling from a uniform distribution ${\cal F}_{t} = U(0,5)$, 
then we %
train the deep neural network ${\cal N}$ by minimizing the loss function of (\ref{eq:loss_define}). %
All norms used in this paper are $\mathcal{L}_1$ norm.  
The neural network, with $8$ layers, $80$ neurons and the ReLU activation function in each layer,
is trained for 50 epochs, optimized with Adam. All the learning rate in this paper is set to be $5\times 10^{-3}$. 
The same neural network architecture is used in Deep Heun's method (\ref{eq:Heun_method_NN_ODE}) for comparison. 
Figure \ref{fig:1} shows the evolution of the trained ${\cal N}(x_{m}, x_{m+1},y_{m};\theta)$ in DEM and %
the local truncation error function $R(x_{m}, x_{m+1},y_{m},y_{m+1})$ in the Euler method.
The four different step sizes, that is $h = 0.01,$  $0.1$, $1.0$ and $2.0$  are displayed.  
Since it is trained in $(0,5)$, ${\cal N}(x_{m},x_{m+1},y_{m};\theta)$ almost coincides with $R(x_{m},x_{m+1},y_{m},y_{m+1})$ in $(0,5)$. 

	Figure \ref{fig:2} shows the exact solution and four approximations of (\ref{eq:the_first_example}).  The four different step sizes are also displayed. It is observed that only in the small step $h$ can the Euler method and the Huen obtain a more accurate approximation of the solution. We also note the fact that ${\cal N}$ is only trained in $(0,5)$.  However, in $(5,10)$,  DEM and DHM can get the accurate approximation of the solution even for the bigger step size $h=2.0$.  This indicates the efficient prediction of the deep neural network.
	
	In Table \ref{table:the_first_example},  %
we discuss the results of the comparison among four methods for the different step sizes. 
The Euler method, the Heun's method, DEM and DHM are adopted for solving (\ref{eq:the_first_example}). 
The first four columns of Table \ref{table:the_first_example} show the prediction errors 
between the exact solution and the approximation in $L_1$ norm, i.e. $e= \max_{m}|y(x_m) - y_m|$. 
Since the global truncation error of Deep Euler Method and Deep Heun's method are $\mathcal{O}(\eta h)$ and $\mathcal{O}(\eta h^2)$.  That is the reason that when $h\geq 1$, DEM and DHM get more accuracy than the Euler method and the Heun method. 
Our goal is to get the estimates of $\eta$. 
In view of Lemma \ref{lemma}, $\left| {\cal N}(x_{m},x_{m+1},z_{m};\theta) - \dfrac{1}{h^{2}}R_{m} \right| \approx \left|  {\cal N}(x_{m},x_{m+1},z_{m};\theta) - R(x_{m},x_{m+1},z_{m},z_{m+1})  \right| =   \mathcal{O}(\eta)$. 
In the column of $\varepsilon_{mean}$, we present the mean of the difference between ${\cal N}$ and $R$, which is $\varepsilon_{mean}=\dfrac{1}{M}\sum_{m} \left| {\cal N} - R \right| = \mathcal{O}(\eta)$.  
In the last column, we present the results of DEM (the fourth column) divided by the result of the Euler method (the second column), i.e., ${e_{DEM}}/{e_{Euler}} = \mathcal{O}(\eta)$. 
From the last two columns, we can conclude that $\eta = {\cal O} (0.001)$, which also indicates the efficiency of DEM. 

Table \ref{table:Example1_layers_neurals_points} %
		shows $\varepsilon_{mean}$ for different network architectures (the number of hidden layers and neurons) and the different number of %
		random measured points. We evolution the neural networks with %
		$h=0.1$. To avoid uncertainty during the training process, we simulate each case %
		ten times and take the average value of them. 
		It can be observed that the prediction accuracy increase with the number of measured points. 
		If the networks with too small layers and neurons per layer (such as $2$ layers and $20$ neurons per layer), it is not suitable in the case of  a small number of measured data since it has a high bias in the training region $[0,5]$ and a high variance in the testing region $(5,10]$.  
		If the networks with the  more layers and neurons per layer (such as $16$ layers and $160$ neurons per layer), it may be overfitted in the case of the small number of measured data since it has low bias and high variance. More measured data can reduce the variance. 
		We choose the networks with $8$ layers and $80$ neurons per layer for all experiments.  
		Because no matter in a small amount or a large number of measured data, it has low variance and low bias than others.
\begin{figure}[!h]
	\centering
	\begin{subfigure}{0.48\textwidth}
		\includegraphics[width=\linewidth]{./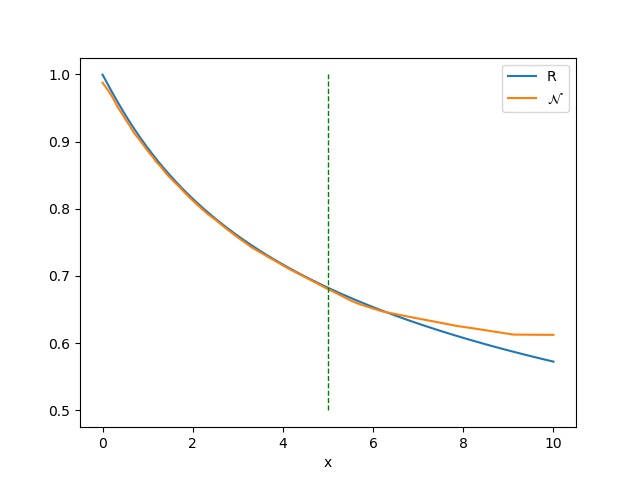}
			\caption{$h=0.01$}
	\end{subfigure}\hspace*{\fill}
	\begin{subfigure}{0.48\textwidth}
		\includegraphics[width=\linewidth]{./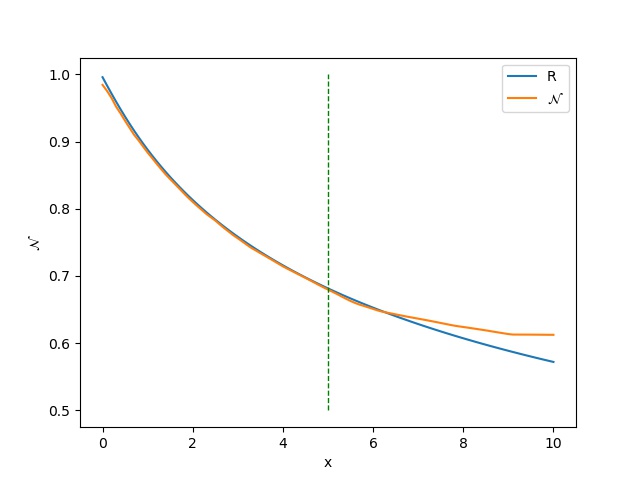}
						\caption{$h=0.1$}
	\end{subfigure}
	
	\medskip
	\begin{subfigure}{0.48\textwidth}
		\includegraphics[width=\linewidth]{./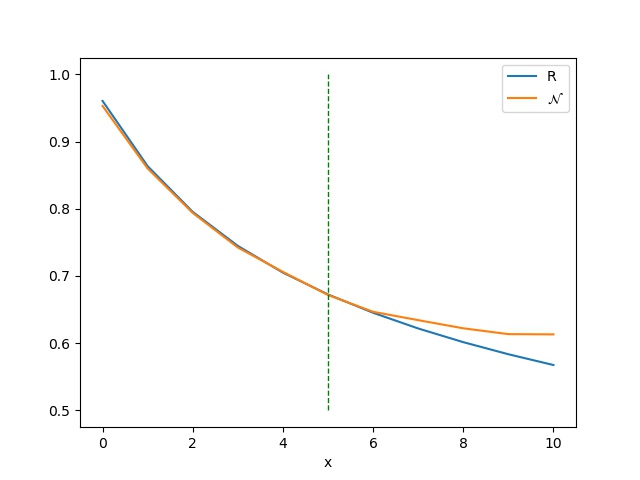}
						\caption{$h=1.0$}
	\end{subfigure}\hspace*{\fill}
	\begin{subfigure}{0.48\textwidth}
		\includegraphics[width=\linewidth]{./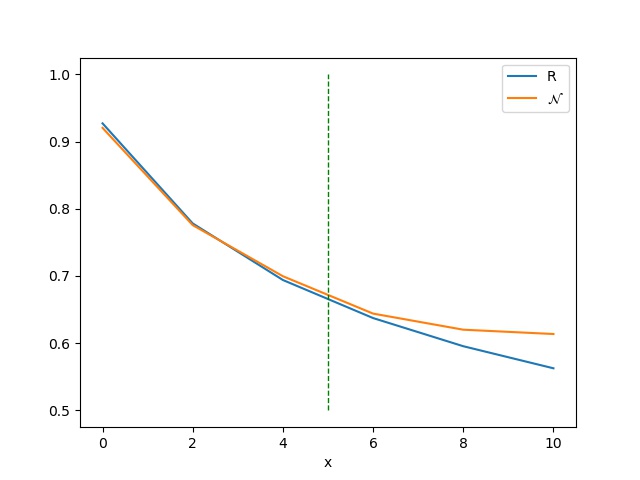}
					\caption{$h=2.0$}
	\end{subfigure}
	\caption{The evolution of the trained neural network ${\cal N}(x_{m}, x_{m+1},y_{m};\theta)$ in DEM and of the local truncation error function $R(x_{m}, x_{m+1},y_{m},y_{m+1})$ in the Euler method. The four different step sizes are $0.01$, $0.1$, $1.0$ and $2.0$. The red line is the result of ${\cal N}$. The blue line is the result of $R$.}
			\label{fig:1}
\end{figure}

\begin{table}[H]
	\centering
	\begin{minipage}{0.85\textwidth}
			\caption{ 
			The results of the comparison among four methods for different step sizes.  		
			Prediction errors between the exact solution and the approximation in $L_1$ norm are listed, i.e., $e= \max_{m}|y(x_m) - y_m|$.  		
			The column of $\varepsilon_{mean}$ is $\sum_{m} \left| {\cal N} - R \right|/M$, where $M$ is the number of the steps.  
			The last column is the result of DEM (the fourth column) divided by the result of the Euler method (the second column). 
		}
		\label{table:the_first_example}
	\end{minipage}
	{\small 
		\begin{tabular}{|c|c|c|c|c|c|c|}
			\hline
			step size & Euler method & Heun's method  & DEM & DHM & $\varepsilon_{mean}$ & ${e_{DEM}}/{e_{Euler}}$  \\ %
			\hline 
			0.01 & {0.42} & {0.0017}& {0.0014} & {0.000053} & {0.0086} & {0.0033}  \\
			\hline 
			{0.1} & {4.05} & {0.15} & {0.013}& {0.0051} & {0.0089}   & {0.0032}  \\
			\hline
			{1} & {28.42} & {8.10}& {0.073}& {0.32} & {0.012}  & {0.0026}   \\
			\hline 
			{2} & {43.16} & {18.78} & {0.083}& {1.03} & {0.016}   & {0.0019}  \\
			\hline 
		\end{tabular}
	}
\end{table}

\begin{figure}[H] 
	\centering
	\begin{subfigure}{0.48\textwidth}
		\includegraphics[width=\linewidth]{./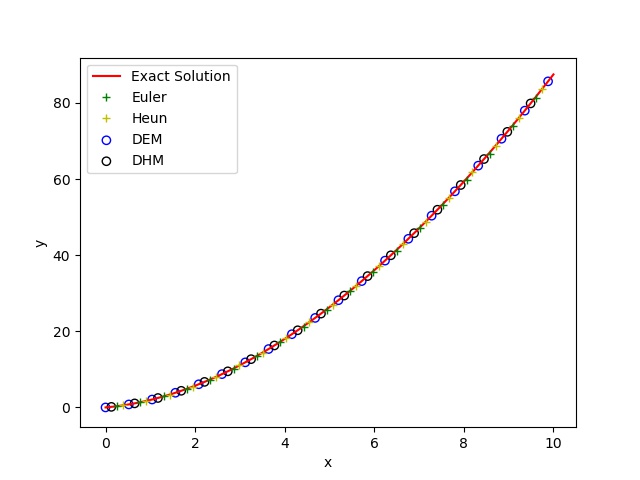}
			\caption{$h=0.01$}
	\end{subfigure}\hspace*{\fill}
	\begin{subfigure}{0.48\textwidth}
		\includegraphics[width=\linewidth]{./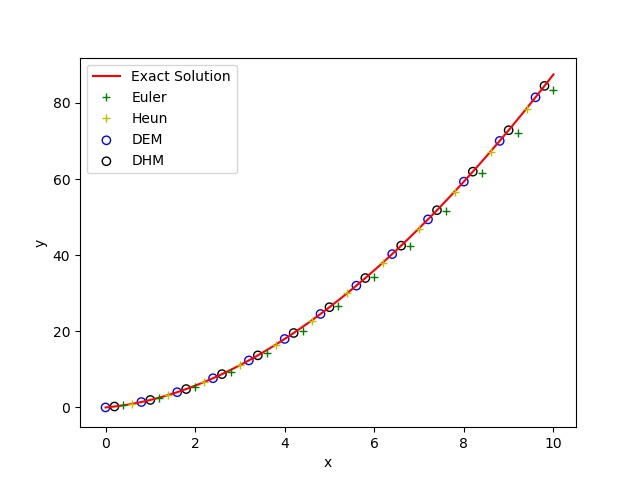}
						\caption{$h=0.1$}
	\end{subfigure}
	
	\medskip
	\begin{subfigure}{0.48\textwidth}
		\includegraphics[width=\linewidth]{./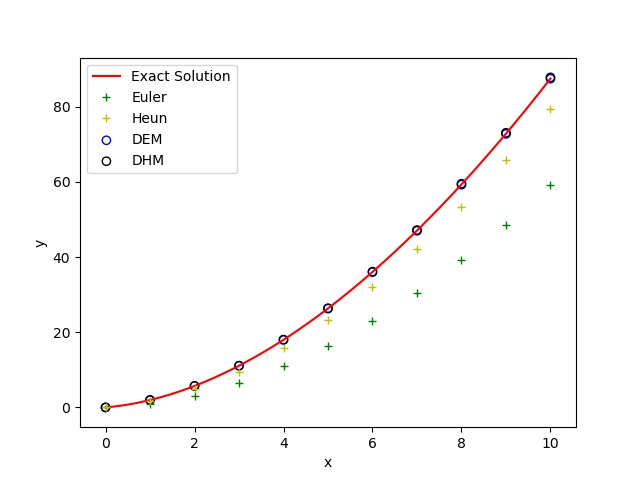}
					\caption{$h=1.0$}
	\end{subfigure}\hspace*{\fill}
	\begin{subfigure}{0.48\textwidth}
		\includegraphics[width=\linewidth]{./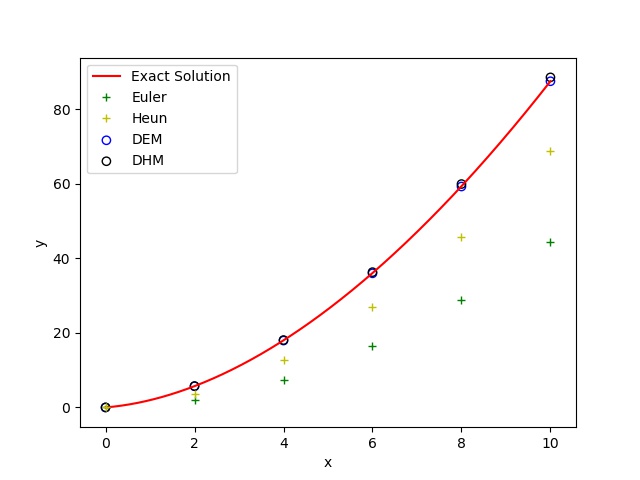}
					\caption{$h=2.0$}
	\end{subfigure}
		\caption{The exact solution and four approximations of (\ref{eq:the_first_example}). 		  
			The four different step sizes are $0.01$, $0.1$, $1.0$ and $2.0$. 
			The green plus is the result of the Euler method. The yellow plus is the result of the Heun method.
			The blue circle is the result of DEM. The black circle is the result of DHM.
		}
			\label{fig:2}
\end{figure}

\begin{table}[H]
	\centering
	\begin{minipage}{0.85\textwidth}
			\caption{ The results of $\varepsilon_{mean}$, i.e., the average error of between $\mathcal{N}$ and $R$, for the different number of hidden layers and neurons per layer, as well as the different number of measured points. The number on the left is the error in the training region $[0, 5]$, and the error in the test region $(5, 10]$ is on the right. All the step size is $h=0.1$. 
			}			\label{table:Example1_layers_neurals_points}
	\end{minipage}
	\begin{tabular}{|c|cc|cc|cc|cc|}
		\hline
		& \multicolumn{8}{c|}{layers \& neurons} \\
		\hline
		points %
		& \multicolumn{2}{|c|}{2 $\times$ 20}  & \multicolumn{2}{|c|}{4$\times$ 40} & \multicolumn{2}{|c|}{8$\times$ 80} & \multicolumn{2}{|c|}{16 $\times$ 160} \\
		\hline
		10 & {0.21} &{0.69} &  {0.067} &{0.36}  &  {0.033}&{0.11} &  {0.071}&{0.17} \\
		\hline
		25 &  {0.20}&{0.81} &  {0.03}&{0.16}  &  {0.014}&{0.061} &  {0.075}&{0.16} \\
		\hline
		50 &  {0.081}&{0.41} &  {0.024}&{0.36}  &  {0.022}&{0.073} &  {0.049}&{0.12} \\
		\hline
		100 &  {0.017}&{0.21} &  {0.0093}&{0.14}  &  {0.011}&{0.045} &  {0.014}&{0.052} \\
		\hline
		200 &  {0.0096}&{0.28} &  {0.0056}&{0.080}  &  {0.0093}&{0.030} &  {0.0084}&{0.035} \\
		\hline
		500 &  {0.0066}&{0.22} &  {0.0024}&{0.072}  &  {0.0028}&{0.048} &  {0.0039}&{0.048} \\
		\hline
	\end{tabular}
\end{table}

\begin{figure}[H]
	\begin{subfigure}{0.48\textwidth}
		\includegraphics[width=\linewidth]{./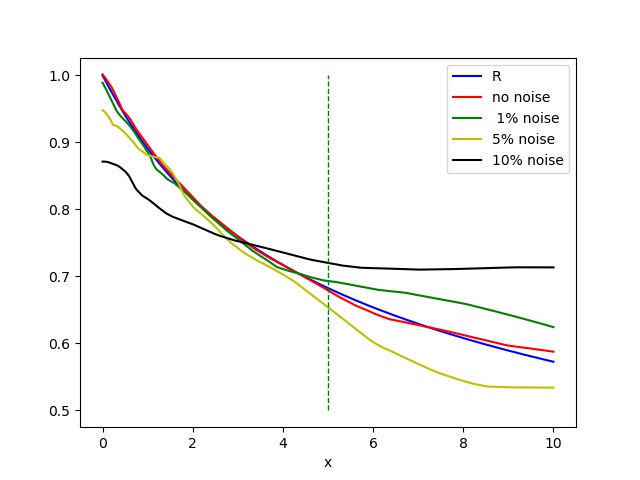}
		\caption{h=0.01} 
	\end{subfigure}\hspace*{\fill}
	\begin{subfigure}{0.48\textwidth}
		\includegraphics[width=\linewidth]{./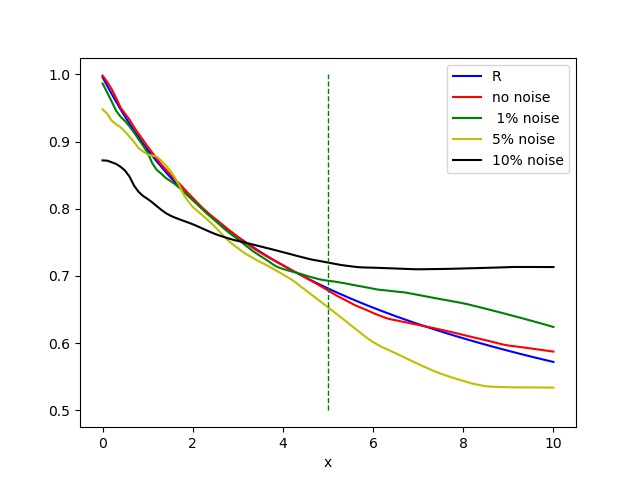}
		\caption{h=0.1} %
	\end{subfigure}
	
	\medskip
	\begin{subfigure}{0.48\textwidth}
		\includegraphics[width=\linewidth]{./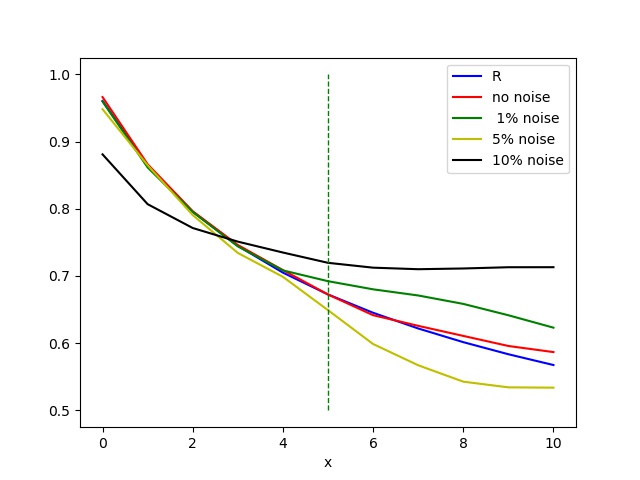}
		\caption{h=1.0} %
	\end{subfigure}\hspace*{\fill}
	\begin{subfigure}{0.48\textwidth}
		\includegraphics[width=\linewidth]{./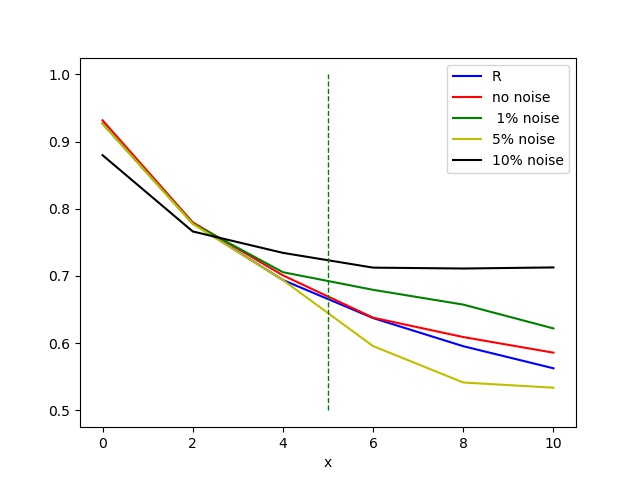}
		\caption{h=2.0} 
	\end{subfigure}
	\caption{
		For comparison, the DEMs and the local truncation error function $R$ are listed together. 
		Each ${\cal N}$ of DEM is trained %
		for one case of noise levels ($\delta = 0\%, 1\%,5\%,10\%$). 
		Four subfigures show the evolutions of the four ${\cal N}(x_{m}, x_{m+1},y_{m};\theta)$  and $R$ for $h=0.01$, $0.1$, $1.0$ and $2.0$, respectively.
	} 
	\label{fig:noise}
\end{figure}

Figure \ref{fig:noise} shows the DEMs and the local truncation error function $R$ together for comparison. Each ${\cal N}$ of DEM is trained for one case of noise levels ($\delta = 0\%, 1\%,5\%,10\%$). The four different step sizes, that is  $h= 0.01$, $0.1$, $1.0$ are displayed. It can be observed from the subfigures that the change of step size has little effect on the result. 
In fact, from Table \ref{table:noise}, we also noted that. 
When the step size $h$ changes from $0.01$ to $2.0$, $\varepsilon_{mean}$ under various noise levels ($\delta = 0, 1 \%, 5 \%, 10 \%$) are almost the same.
However,  obviously,  $e_{DEM}$ is the smallest in the case of the smallest $h$ and the lest noise level (noise-free).

\begin{table}[H]
	\centering
	\begin{minipage}{0.85\textwidth}
				\caption{The performance of DEM %
				for the different noise levels and the corresponding numerical solutions.
			}
			\label{table:noise}
	\end{minipage}
	\begin{tabular}{|c|cccc|cccc|}
		\hline
		& \multicolumn{4}{|c|}{$\epsilon_{mean}$}  &\multicolumn{4}{|c|}{$e_{_{DEM}}$} \\
		\hline
		\diagbox{ $h$ %
		}{$\delta $ %
		} & 0\% & 1\% & 5\% & 10\% & 0\% & 1\% & 5\% & 10\%  \\ 
		\hline
		0.01 & 0.005 & 0.02 & 0.04 & 0.07 & 0.001 & 0.001 & 0.01 & 0.02\\
		\hline
		0.1 & 0.005 & 0.02 & 0.03 & 0.07 & 0.01 & 0.01 & 0.01 & 0.02\\
		\hline
		0.5 & 0.005  & 0.02 & 0.03 & 0.07 & 0.06 & 0.09 & 0.35 & 0.58\\
		\hline
		1.0 & 0.006 & 0.03 & 0.03 & 0.07 & 0.12 & 0.27 & 0.50 & 0.71\\
		\hline
		2.0 & 0.01 & 0.03 & 0.02 & 0.07 & 0.24 & 0.55 & 0.45  & 0.50\\
		\hline
	\end{tabular}
\end{table}

\subsection{Example 2}
We now consider a system of first-order nonlinear differential equations, the Lotka-Volterra equation (\cite{lotka1925elements}),
\begin{equation}\label{eq:Example2_Lotka_Volterra}
		\left\{
		\begin{aligned}
				\frac{dy_1}{{dx}} &=\alpha y_1- \beta {y_1y_2}, \\ 
				\frac{{dy_2}}{{dx}}& = -\gamma y_2+ \delta {y_1y_2}.
		\end{aligned}
		\right.
\end{equation}
This equation
describes the dynamics of the populations of two species, one as a predator and the other as prey. 
In (\ref{eq:Example2_Lotka_Volterra}), %
$y_1$ and $y_2$ are the number of prey and predator, respectively. 
Let $\boldsymbol{y} = [y_1,y_2]^T$,  $x $ represent time and $\alpha,\beta,\gamma,\delta$ be the parameters describing the relationship of two species. 
In this work, we take $\alpha = \beta = \gamma =  \delta = 1 $, and the initial conditions $\boldsymbol{y} = [y_1(0),y_2(0)]^T=[2,1]^T.$

For comparison, the exact solution is gotten by the numerical  method of %
$RK45$ in scipy.integrate \cite{2020SciPy-NMeth} with $10^{-6}$ relative tolerances. 
We sample 1000 random points from a uniform distribution $\mathcal{F}_t = U(0,15)$ to construct the noise-free training dataset $\{(t_i,y(t_i)\}_1^{1000}$.  The deep neural network with $8$ hidden layers and $80$ neurons per layer is trained with super parameters the same as example 1.

\begin{figure}[H]
	\begin{subfigure}{0.48\textwidth}
		\includegraphics[width=\linewidth]{./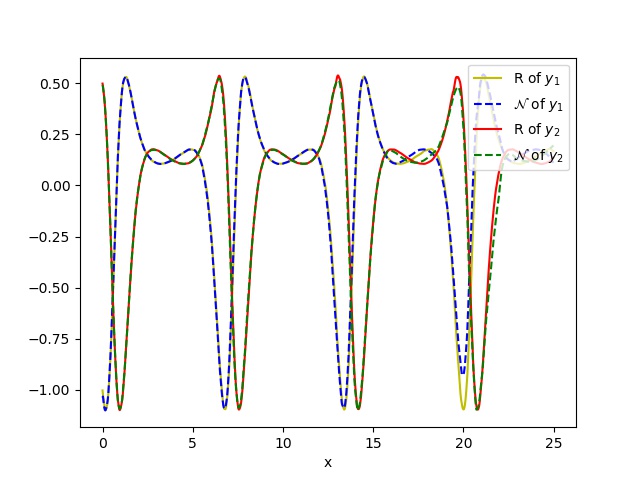}
		\caption{$h=0.01$} 
	\end{subfigure}\hspace*{\fill}
	\begin{subfigure}{0.48\textwidth}
		\includegraphics[width=\linewidth]{./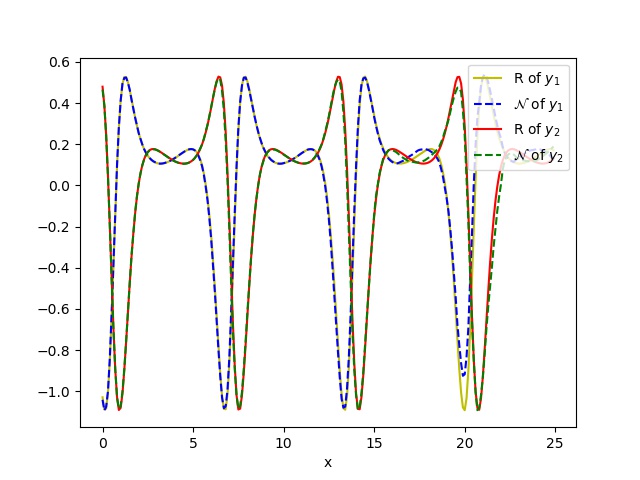}
		\caption{$h=0.1$} %
	\end{subfigure}
	
	\medskip
	\begin{subfigure}{0.48\textwidth}
		\includegraphics[width=\linewidth]{./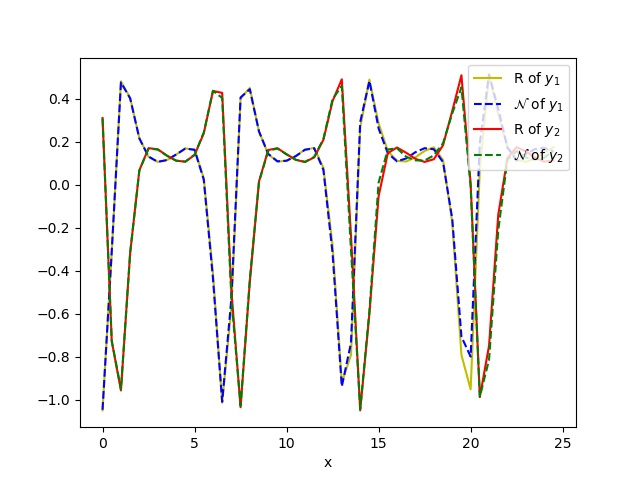}
		\caption{$h=0.5$} %
	\end{subfigure}\hspace*{\fill}
	\begin{subfigure}{0.48\textwidth}
		\includegraphics[width=\linewidth]{./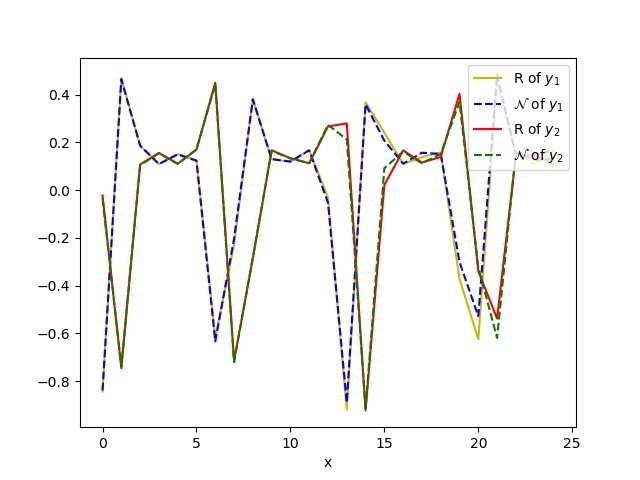}
		\caption{$h=1.0$} 
	\end{subfigure}
	
	\caption{The evolution of the trained neural networks $\mathcal{N}(x_m,x_{m+1},\boldsymbol{y_m})$ in DEM and the local truncation error function $R(x_m,x_{m+1},\boldsymbol{y_m},\boldsymbol{y_{m+1}})$ for different step sizes $h=0.01,0.1,0.5,1.0$ }
	\label{fig:4}
\end{figure}
\begin{figure}[H] 
	\begin{subfigure}{0.4\textwidth}
		\includegraphics[width=\linewidth]{./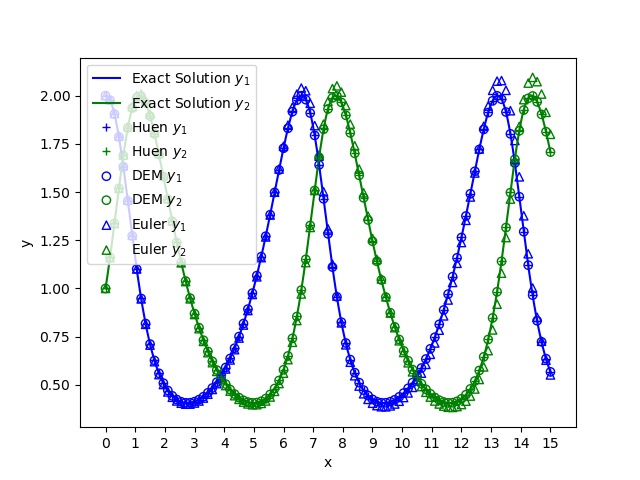}
		\caption{$h=0.01$ in [0,15]} 
	\end{subfigure}\hspace*{\fill}
	\begin{subfigure}{0.4\textwidth}
		\includegraphics[width=\linewidth]{./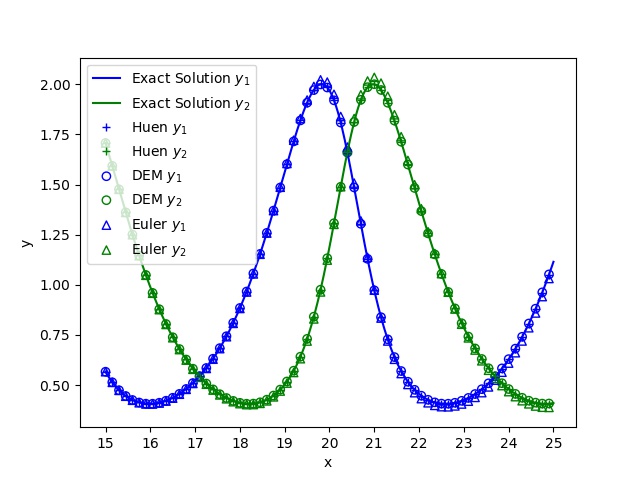}
		\caption{$h=0.01$ in [15,25]} %
	\end{subfigure}
	\medskip
	\begin{subfigure}{0.4\textwidth}
		\includegraphics[width=\linewidth]{./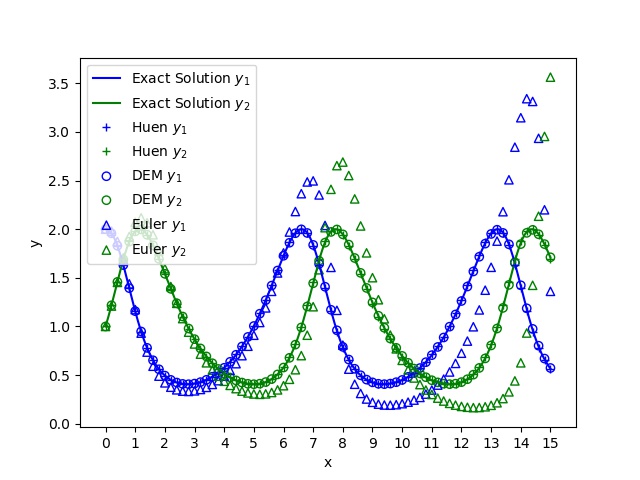}
		\caption{$h = 0.1$ in [0,15]} %
	\end{subfigure}\hspace*{\fill}
	\begin{subfigure}{0.4\textwidth}
		\includegraphics[width=\linewidth]{./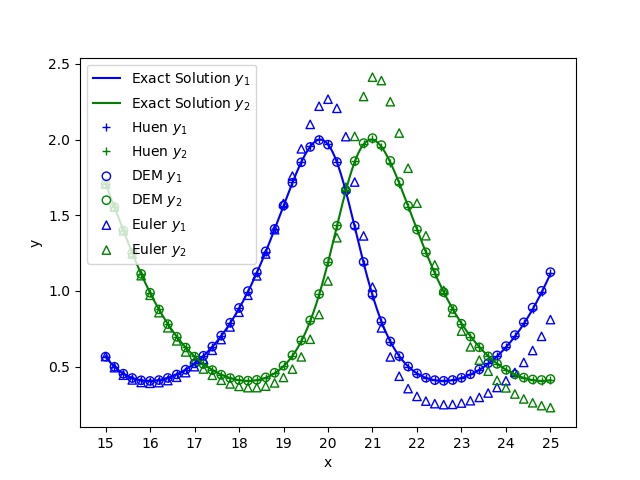}
		\caption{$h = 0.1$ in [15,25]} 
	\end{subfigure}
	\medskip
	\begin{subfigure}{0.4\textwidth}
		\includegraphics[width=\linewidth]{./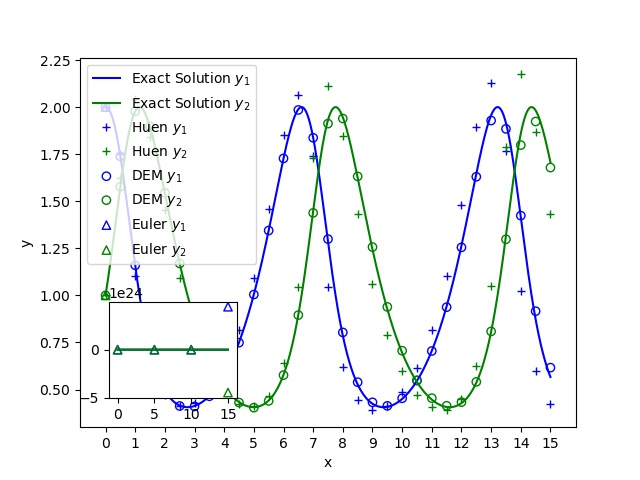}
		\caption{$h=0.5$ in [0,15]}
	\end{subfigure}\hspace*{\fill}
	\begin{subfigure}{0.4\textwidth}
		\includegraphics[width=\linewidth]{./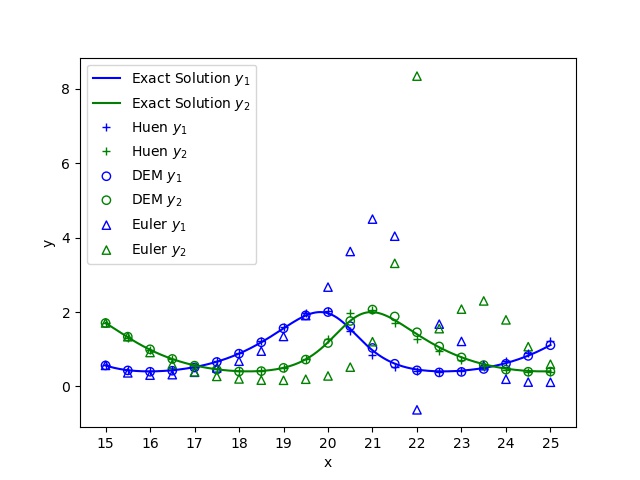}
		\caption{$h=0.5$ in [15,25]}
	\end{subfigure}
	\medskip
	\begin{subfigure}{0.4\textwidth}
		\includegraphics[width=\linewidth]{./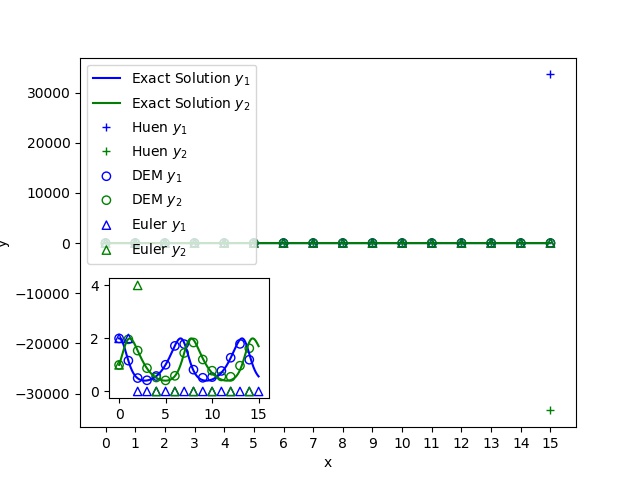}
		\caption{$h= 1.0$ in [0,15]} %
	\end{subfigure}\hspace*{\fill}
	\begin{subfigure}{0.4\textwidth}
		\includegraphics[width=\linewidth]{./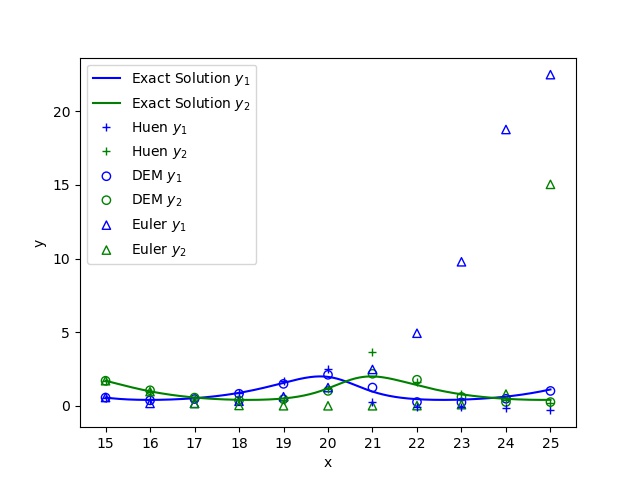}
		\caption{$h=1.0$ in [15,25]} 
	\end{subfigure}
	\caption{
		The evolutions  of the Euler method, the Huen method and DEM are displayed for noise-free measured data. The four different step sizes ($h=0.01, 0.1, 0.5$ and $1.0$)  are considered.
		} 
	\label{fig:5}
\end{figure}

Figure \ref{fig:4} shows the neural network ${\cal N}$ of DEM and the local truncation error function $R$ in both the training region $[0,15]$ and the testing region $(15,25]$.  The four different cases of $h=0.01, 0.1, 0.5, 1.0$ are displayed.  
It shows that DEM can accurately approximate the solution in the whole region  $(15,25]$ even for  $h=1.0$.
		In Figure \ref{fig:5}, we show the evolutions of the Euler method, the Huen method and DEM for comparison. 
		For the cases of $h=0.5$ and $h=1.0$, the results of the Euler method over $[0,15]$ are displayed in the left sub-figure. 
		For the case of $h = 0.5$, the solutions of the Euler method diverge from the exact value very much near $x=15$. When $h = 1.0$, the solution $y_2$ of the Euler method is close to $4$, but the solution of $y_1$ is near $0$, both of which are far away from the exact value. The Huen method has similar defects. When $h=1.0$, the solutions of the Huen method are close to $30000$, which is far away from the exact value. It can be observed that 		
		no matter the small $h$ or the big $h$, the results of DEM and the curve of the exact solution almost coincide. 

\subsection{Example 3}
Considering the Kepler problem :
\begin{equation}\label{eq:Example3_Kepler_eq}
		\left\{
		\begin{aligned} 
		\frac{d y_{1}}{d x} &=y_{3}, \\ 
		\frac{d y_{2}}{d x} &=y_{4}, \\ 
		\frac{d y_{3}}{d x} &=-\frac{y_{1}}{\left(y_{1}^{2}+y_{2}^{2}\right)^{3 / 2}}, \\ 
		\frac{d y_{4}}{d x} &=-\frac{y_{2}}{\left(y_{1}^{2}+y_{2}^{2}\right)^{3 / 2}}. 
		\end{aligned}
		\right.
\end{equation}

It describes the motion of the sun and a single planet which is a special case of the two-body problem. We denote the time by$x$. %
Let $(y_1(x),  y_2(x)) $ be the positions of the planet in rectangular coordinates centered at the sun and $y_3(x), y_4(x)$ be the velocity components in the $y_1$ and $y_2$ directions. The initial value  are $y(0)=[1,0,0,1]^T$, and the exact solution is $y(x)=[\cos (x), \sin (x),-\sin (x), \cos (x)]^{T}$. 

Similar to example 2, we use the uniform distribution to generate $1000$ noise-free data points and select the same values of the super parameters as in Example 1.  In Figure \ref{fig:6}, four different cases of $h$ are displayed. It can be observed that $\mathcal{N}(x_m,x_{m+1},\boldsymbol{y}_m)$ of DEM  well approximates $R(x_m,x_{m+1},\boldsymbol{y}_m,\boldsymbol{y}_{m+1})$ for all of $h$. 
We can also note that the approximations of DEM (circles in the figure) almost coincides with the curve of the exact solution in Figure \ref{fig:7}, even for $h=1.0$. 

\section{Conclusion}
		In this work, we proposed a Deep Euler Method with the idea of approximating the truncation error in the Euler method via deep learning. When deep neural network $\mathcal{N}$ is trained to approximate the local truncation error function with accuracy $\mathcal{O}(\eta)$, the global truncation error of DEM with the step size $h$ would be  $\mathcal{O}(\eta h)$ while the Euler method is only $\mathcal{O}(h)$. %
		Since $\eta$ can be small enough, it could  achieve high accuracy solutions even with a big step size ($h\geq 1$). %
		DEM significantly improves the accuracy of the Euler method and 
		reduces the constrain of the step size in the Euler method. 
		On the other hand, since 
		the training objective function of the deep neural network in DEM is always $\mathcal{O}(1)$, 
		the deep neural network can be easily trained and fast to converge, 
		even if only the simplest architecture of the fully connected network and only  a few training data are used. 
		Moreover, DEM shows good robustness with the noise of the measured data. 

\begin{figure}[H] 
	\begin{subfigure}{0.48\textwidth}
		\includegraphics[width=\linewidth]{./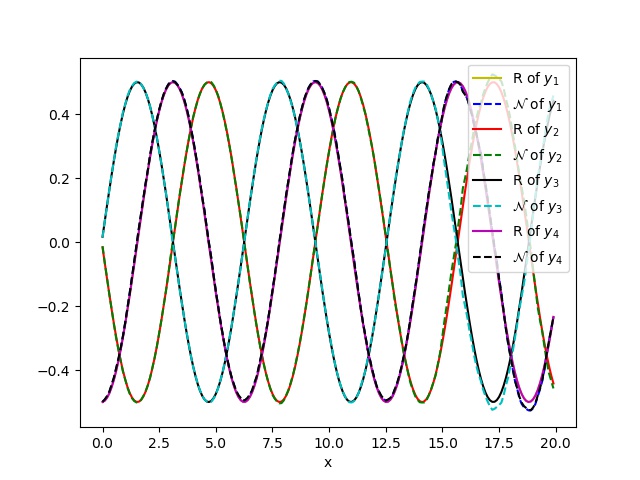}
		\caption{h=0.1} 
	\end{subfigure}\hspace*{\fill}
	\begin{subfigure}{0.48\textwidth}
		\includegraphics[width=\linewidth]{./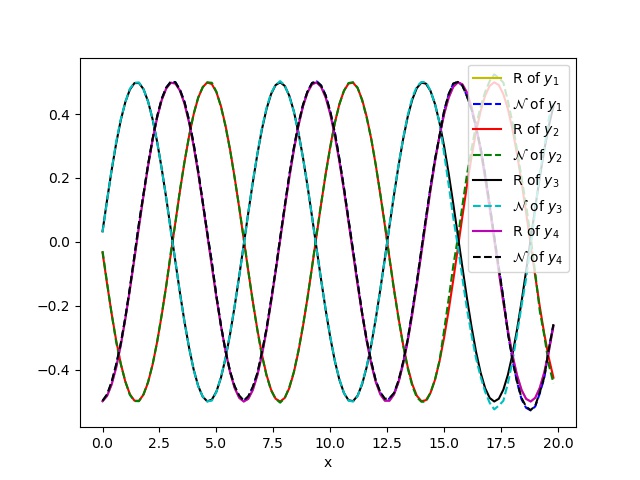}
		\caption{h=0.2} %
	\end{subfigure}
	
	\medskip
	\begin{subfigure}{0.48\textwidth}
		\includegraphics[width=\linewidth]{./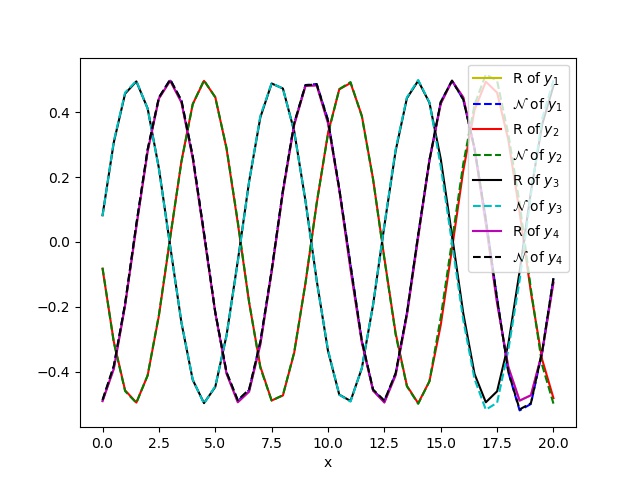}
		\caption{h=0.5} %
	\end{subfigure}\hspace*{\fill}
	\begin{subfigure}{0.48\textwidth}
		\includegraphics[width=\linewidth]{./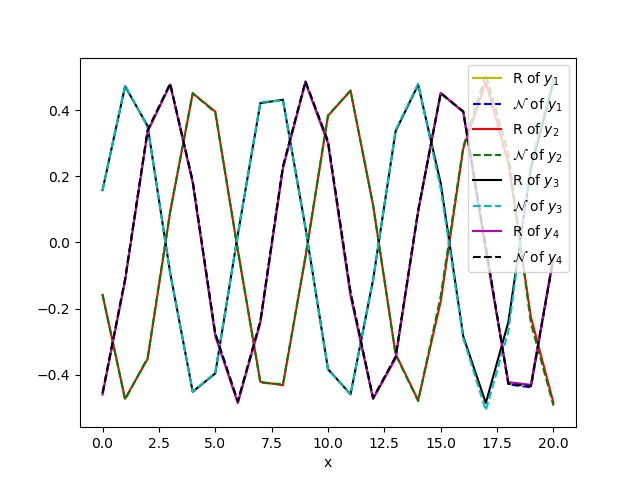}
		\caption{h=1.0} 
	\end{subfigure}
	\caption{The evolutions of the neural network $\mathcal{N}(x_m,x_{m+1},\boldsymbol{y}_m)$ and the local truncation error function $R(x_m,x_{m+1},\boldsymbol{y}_m,\boldsymbol{y}_{m+1})$ of the equation (\ref{eq:Example3_Kepler_eq}) for $h = 0.1,0.2,0.5,1.0$, respectively.}
	\label{fig:6}
\end{figure}

\begin{figure}[H] 
	\begin{subfigure}{0.4\textwidth}
		\includegraphics[width=\linewidth]{./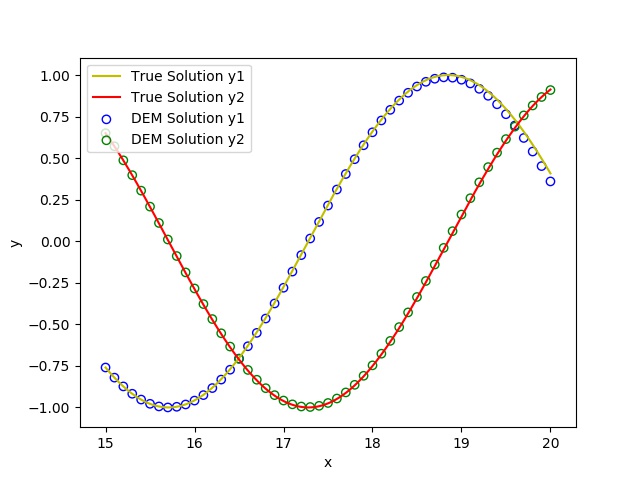}
		\caption{The components of $y_1,y_2$ with $h=0.1$} 
	\end{subfigure}\hspace*{\fill}
	\begin{subfigure}{0.4\textwidth}
		\includegraphics[width=\linewidth]{./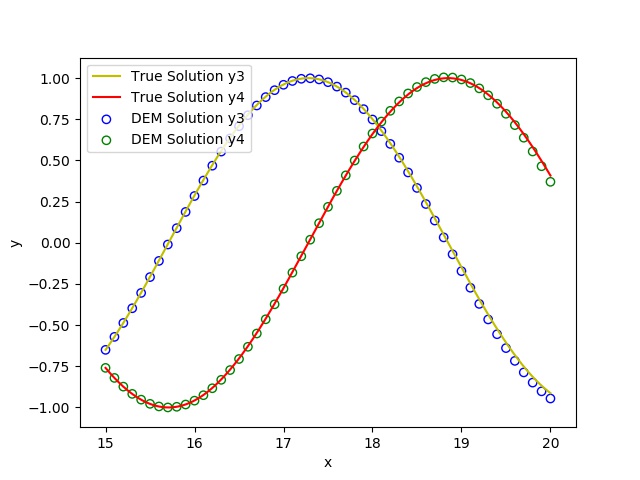}
			\caption{The components of $y_3,y_4$ with $h=0.1$} 
	\end{subfigure}
	
	\medskip
	\begin{subfigure}{0.4\textwidth}
		\includegraphics[width=\linewidth]{./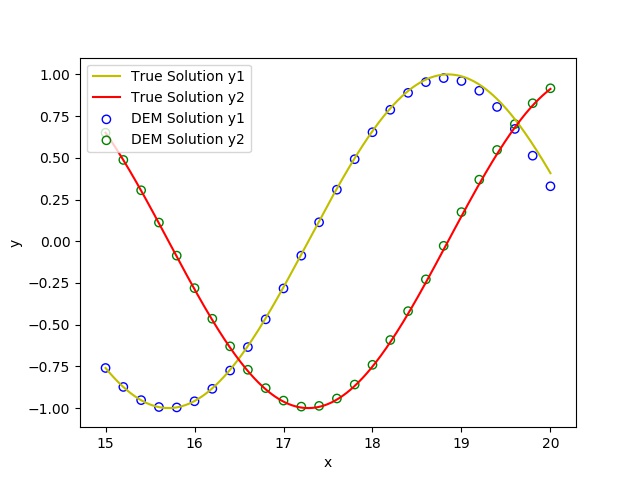}
			\caption{The components of $y_1,y_2$ with $h=0.2$} 
	\end{subfigure}\hspace*{\fill}
	\begin{subfigure}{0.4\textwidth}
		\includegraphics[width=\linewidth]{./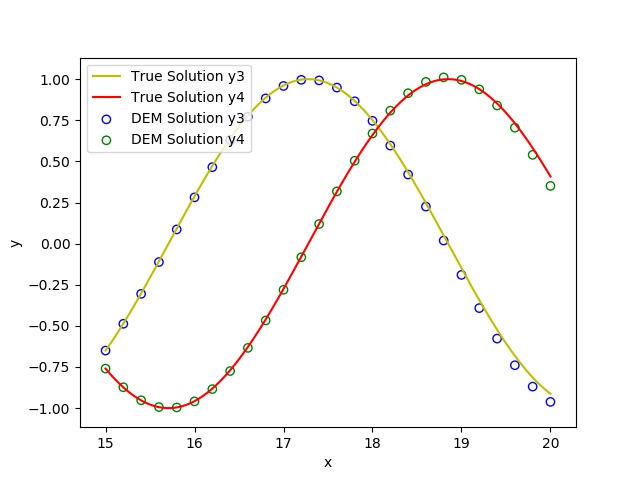}
						\caption{The components of $y_3,y_4$ with $h=0.2$} 
	\end{subfigure}
	
	\medskip
	\begin{subfigure}{0.4\textwidth}
		\includegraphics[width=\linewidth]{./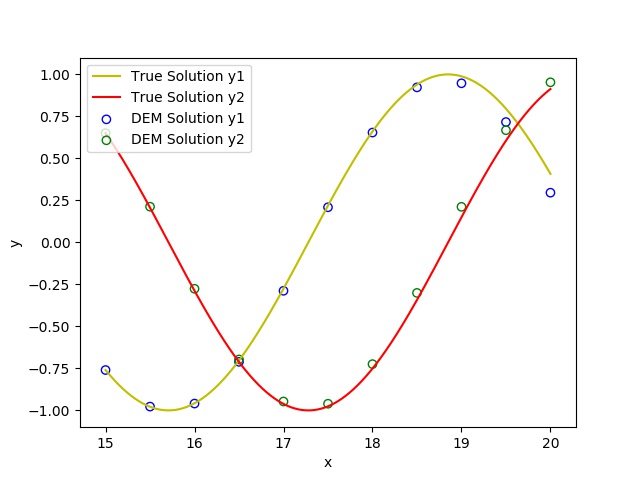}
			\caption{The components of $y_1,y_2$ with $h=0.5$} 
	\end{subfigure}\hspace*{\fill}
	\begin{subfigure}{0.4\textwidth}
		\includegraphics[width=\linewidth]{./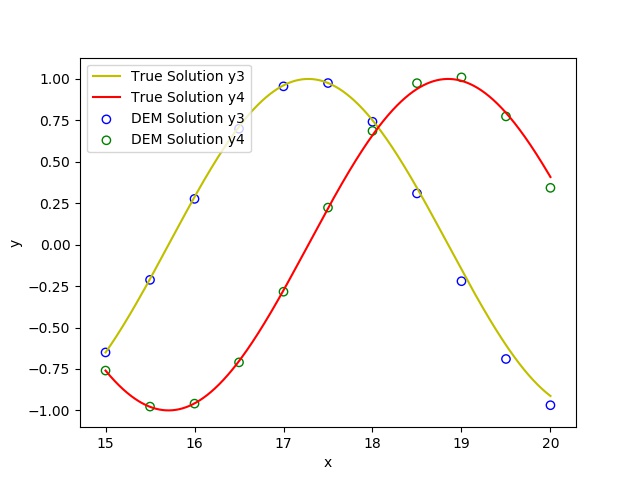}
						\caption{The components of $y_3,y_4$ with $h=0.5$} 
	\end{subfigure}
	
	\medskip
	\begin{subfigure}{0.4\textwidth}
		\includegraphics[width=\linewidth]{./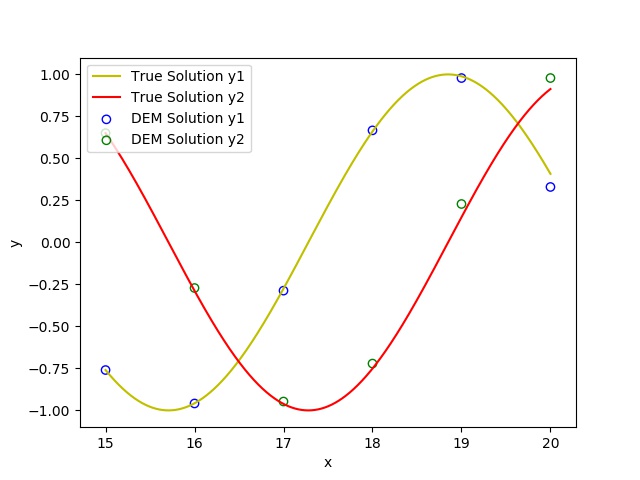}
			\caption{The components of $y_1,y_2$ with $h=1.0$} 
	\end{subfigure}\hspace*{\fill}
	\begin{subfigure}{0.4\textwidth}
		\includegraphics[width=\linewidth]{./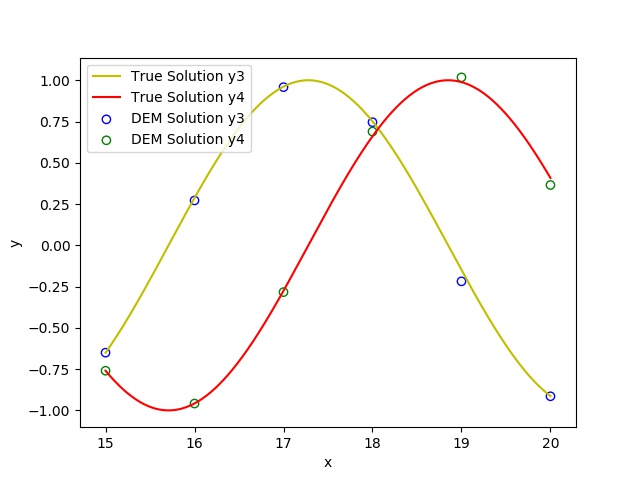}
						\caption{The components of $y_3,y_4$ with $h=1.0$} 
	\end{subfigure}			
	\caption{The exact solution and the %
		approximation of DEM of the equation (\ref{eq:Example3_Kepler_eq}) on region $(15,20]$ for %
		$h=0.1, 0.2, 0.5, 1.0$, respectively.}
	\label{fig:7}
\end{figure}

\printbibliography

\end{document}